\theoremstyle{definition}
\newtheorem{defi}{Definition}[section]
\newtheorem{theorem}[defi]{Theorem}
\newtheorem{corollary}[defi]{Corollary}
\newtheorem{lemma}[defi]{Lemma}
\newtheorem{prop}[defi]{Proposition}
\newtheorem{remark}[defi]{Remark}
\DeclareMathOperator{\diag}{diag}
\DeclareMathOperator{\sgn}{sgn}
\DeclareMathOperator{\Hilb}{Hilb}
\DeclareMathOperator{\contract}{contract}  
\providecommand{\keywords}[1]{\textbf{\textit{Keywords---}} #1}
\title{Parseval-Rayleigh Identities for Homogeneous Complete Intersections}
\author[Karim Adiprasito]{Karim Alexander Adiprasito}
\address{{Karim Adiprasito,\ Ryoshun Oba \ \emph{and}\ Vasiliki Petrotou}, Sorbonne Université 
and Université Paris Cité, CNRS, IMJ-PRG, F-75005 Paris, France}
\email{adiprasito@imj-prg.fr, oba@imj-prg.fr \emph{and} petrotou@imj-prg.fr}
\author{Ryoshun Oba}
\author[Stavros Papadakis]{Stavros Argyrios Papadakis}
\address{{Stavros Papadakis}, Department of Mathematics, University of Ioannina, Ioannina, 45110, Greece}
\email{spapadak@uoi.gr}
\author[Vasiliki Petrotou]{Vasiliki Petrotou}
\date{}
\keywords{Complete intersections, Residues, Parseval-Rayleigh identities, Lefschetz properties}
\subjclass[2020] {Primary  13C40,13A35; Secondary   14M10}
\begin{document}  

\begin{abstract}
We prove, in any positive characteristic, Parseval-Rayleigh identities for the residue map of a
homogeneous complete intersection.
As an application, we give a conceptual proof of the folklore fact 
that  generic homogeneous complete 
intersections have the Strong Lefschetz Property over any field of characteristic $2$.
\end{abstract}

\maketitle

\section{Introduction} 

Parseval-Rayleigh identities originate in Fourier analysis.
An algebraic analogue in positive characteristics has recently been introduced 
in the context of understanding Lefschetz properties for semigroup algebras of lattice polytopes and face rings. 
In the following, 
by Parseval-Reyleigh identities we will only refer to the algebraic analogue. 
The Parseval-Reyleigh  identities  have been the key ingredient in  the resolution of several 
long-standing combinatorial  conjectures for $h^\ast$-vectors \cite{APP}.
They appear in \cite {KLS} in the context of Stanley-Reisner rings of simplicial pseudomanifolds.
They also explain earlier observed phenomena of generic Lefschetz properties \cite{AHL, PP,  KX, APPHAL}.
Hence they  seem to be of considerable importance, but
their existence has been established in very few cases. 

In the present note, we establish  Parseval-Rayleigh identities for  homogeneous complete intersections
in any positive characteristic.

\subsection{Statement} \label{subsec:Parseval statement} 
For the statement, we assume that ${\mathbbm{k}}$ is a field of positive characteristic $p$.
Let $R={\mathbbm{k}}[x_1,\ldots,x_m]$ be a polynomial ring with the standard grading, and let $g_1,\ldots,g_m$ be 
a homogeneous $R$-regular sequence of length $m$ defining an Artinian complete intersection ideal $I=(g_1,\ldots,g_m) \subset R$.
We set $ s= - m + \sum_{i=1}^m \deg g_i$, by Section~\ref{section:normalization}  $s$ is the top degree  of $R/I$.
Let $\pi:R \rightarrow R/I$ be the natural projection, and let $\mathrm{vol}:(R/I)_s \rightarrow {\mathbbm{k}}$ be 
the residue map, see Section~\ref{section:normalization}.

For $i \geq 0$, we denote by $\contract_i:R_i \times R_i \rightarrow {\mathbbm{k}}$ the symmetric bilinear form which is uniquely 
specified by the condition $\contract_i (u,u')=\delta_{u,u'}$ for any two monic monomials $u,u' \in R_i$.
Here  $\delta$ denotes the Kronecker delta.
We denote $\text{contract}_i(u,w)$ simply as $u\circ w$.  
We have the following identities, which we call    \emph{Parseval-Rayleigh identities}.
\begin{theorem}[\bf{Parseval-Rayleigh identities}] \label{thm:Parseval}
    For any $w \in R_s$ we have
     \begin{align} \label{eq:PI}
         \mathrm{vol}(\pi(w)) = \sum_{u \in {\mathcal M}_s} ( (x_1^{p-1}\cdots x_m^{p-1} u^p)
         \circ (g_1^{p-1}\cdots g_m^{p-1}w)) \; (\mathrm{vol}(\pi(u)))^p,
     \end{align}
    where ${\mathcal M}_s$ denotes the set of all degree $s$  monic monomials of $R$.
\end{theorem}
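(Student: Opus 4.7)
The plan is to reformulate the identity via a Cartier-type operator over the perfect closure of ${\mathbbm{k}}$, and then verify it on the decomposition $R_s = I_s \oplus {\mathbbm{k}}\,J$, where $J$ represents the one-dimensional socle of $R/I$.

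First, working over ${\mathbbm{k}}^{\mathrm{perf}}$ (so every element has a $p$-th root), every $f \in R$ has a unique expansion $f = \sum_{0 \le r_i < p} x^r h_r^p$, and I set $\mathcal{C}(f) := h_{(p-1,\ldots,p-1)}$. For $u = x^b \in \mathcal{M}_s$ and $f \in R_{ps+(p-1)m}$, the pairing $(x_1^{p-1}\cdots x_m^{p-1} u^p) \circ f$ extracts the coefficient of $x_1^{p-1+pb_1}\cdots x_m^{p-1+pb_m}$ in $f$, which by the decomposition equals the $p$-th power of the coefficient of $x^b$ in $\mathcal{C}(f)$. Using Frobenius additivity $\sum c_u^p = (\sum c_u)^p$, the right-hand side of Parseval rewrites as $\mathrm{vol}(\pi(\mathcal{C}(g_1^{p-1}\cdots g_m^{p-1} w)))^p$, so the identity to prove becomes
\[
\mathrm{vol}(\pi(w)) \;=\; \mathrm{vol}(\pi(\mathcal{C}(g_1^{p-1}\cdots g_m^{p-1} w)))^p.
\]

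By linearity in $w$, I split $R_s = I_s \oplus {\mathbbm{k}}\,J$, choosing $J = \det(a_{ij})$ for a factorization $g_i = \sum_j a_{ij} x_j$. For $w \in I_s$: since $g_1^{p-1}\cdots g_m^{p-1} \cdot I \subset I^{[p]} := (g_1^p,\ldots,g_m^p)$ (each $g^{p-1} g_j$ carries a factor $g_j^p$), I can write $g^{p-1} w = \sum_i g_i^p \rho_i$. The additivity of $\mathcal{C}$ and the semilinearity $\mathcal{C}(h^p \rho) = h\, \mathcal{C}(\rho)$ then yield $\mathcal{C}(g^{p-1} w) = \sum_i g_i\, \mathcal{C}(\rho_i) \in I$, so both sides of the reformulated identity vanish.

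For $w = J$: Cramer's rule gives $J x_k = \sum_j C_{jk} g_j \in I$, so multiplying by $g^{p-1}$ places $g^{p-1} J$ in the top socle of $R/I^{[p]}$, which is one-dimensional. It is represented by $J^{[p]} := (x_1\cdots x_m)^{p-1} J^p$, itself the Jacobian of the twisted presentation $g_i^p = \sum_j (a_{ij}^p x_j^{p-1}) x_j$, since $\det(a_{ij}^p x_j^{p-1}) = (x_1\cdots x_m)^{p-1} J^p$. The semilinearity of $\mathcal{C}$ yields $\mathcal{C}(J^{[p]}) = J \cdot \mathcal{C}(x_1^{p-1}\cdots x_m^{p-1}) = J$. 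Hence Parseval for $w = J$ reduces to the congruence $g^{p-1} J \equiv J^{[p]} \pmod{I^{[p]}}$; writing $g^{p-1} J \equiv \lambda J^{[p]} \pmod{I^{[p]}}$, I must show $\lambda = 1$.

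The hard part will be this last identity $\lambda = 1$, which is the technical heart of the proof. It expresses a Frobenius-compatibility of the residue: the multiplication-by-$g^{p-1}$ map $(R/I)_s \to (R/I^{[p]})_{ps+(p-1)m}$ should intertwine the socle identifications afforded by $\mathrm{vol}$ and the analogous residue for $R/I^{[p]}$. It is easily verified in the monomial case $g_i = x_i^{d_i}$, where $g^{p-1} J = J^{[p]}$ on the nose; for the general case, I expect it to follow from the presentation-dependent definition of $\mathrm{vol}$ given in Section~\ref{section:normalization}, normalized precisely so that such Frobenius-twist compatibility holds.
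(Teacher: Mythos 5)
Your reduction is sound and, modulo the Cartier-operator packaging, follows the same route as the paper: rewriting the right-hand side as $\mathrm{vol}(\pi(\mathcal{C}(g_1^{p-1}\cdots g_m^{p-1}w)))^p$ is a clean reformulation of the computation in Lemma~\ref{lem:vanish}, the case $w\in I_s$ is handled correctly, and the case $w=J=z_0$ is correctly reduced to the congruence $g_1^{p-1}\cdots g_m^{p-1}z_0\equiv x_1^{p-1}\cdots x_m^{p-1}z_0^p \pmod{(g_1^p,\ldots,g_m^p)}$, which is exactly Lemma~\ref{lem:member}. But you stop there: you explicitly defer the proof that $\lambda=1$, calling it the technical heart and offering only the expectation that it follows from the normalization of $\mathrm{vol}$. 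That is a genuine gap, and the heuristic is slightly off target: the normalization $\mathrm{vol}(\pi(z_0))=1$ concerns $R/I$, whereas $\lambda=1$ is a statement internal to the ring $B=R/(g_1^p,\ldots,g_m^p)$ and does not follow from how $\mathrm{vol}$ is scaled.

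The missing step is short, and you have already written down half of it. You observe that $J^{[p]}=(x_1\cdots x_m)^{p-1}J^p=\det\bigl((a_{ij}^px_j^{p-1})\bigr)$, where $(a_{ij}^px_j^{p-1})$ is a coefficient matrix expressing $g_i^p=\sum_j(a_{ij}^px_j^{p-1})\,x_j$ (this uses $\det(A^{(p)})=(\det A)^p$, valid since the entrywise $p$-th power distributes over the determinant sum in characteristic $p$). The symmetric observation, which you omit, is that $g_1^{p-1}\cdots g_m^{p-1}J=\det\bigl(\diag(g_1^{p-1},\ldots,g_m^{p-1})\cdot(a_{ij})\bigr)$, and $\diag(g_i^{p-1})\cdot(a_{ij})$ is \emph{another} coefficient matrix for the same regular sequence, via $g_i^p=\sum_j g_i^{p-1}a_{ij}\,x_j$. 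Proposition~\ref{prop:well-def}, applied to the Artinian complete intersection $(g_1^p,\ldots,g_m^p)$, says that $\pi_B(\det N)$ is independent of the choice of coefficient matrix $N$; applied to these two matrices it gives $g_1^{p-1}\cdots g_m^{p-1}J\equiv J^{[p]}\pmod{(g_1^p,\ldots,g_m^p)}$, i.e.\ $\lambda=1$. This is precisely how the paper proves Lemma~\ref{lem:member}; with that inserted, your argument closes.
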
  

In Section~\ref{sec!dislp} we 
discuss  applications, such as anisotropy properties for the Hodge-Riemann pairing for complete intersections, 
generalizing well-known Lefschetz properties \cite{MN, HMMNWW, HMNW, Stanley}.
In particular, in Corollary~\ref{cor:Lefschetz}  we give a  proof of the folklore fact 
that over any  (finite or infinite)  field of characteristic $2$
generic homogeneous complete  intersections have the Strong Lefschetz Property, where
generic  here 
means that the coefficients are new independent variables.    \\

\paragraph{{\it Acknowledgements.}}  S.~A.~P. and V.~P. thank Frank-Olaf Schreyer for useful discussions. 
 K.~A.~A., R.~O. and V.~P. were supported by Horizon Europe 
ERC Grant number: 101045750 / Project acronym: HodgeGeoComb. R.~O. was supported by JSPS Overseas Research Fellowships. 
We benefitted from experimentation with Macaulay2 \cite{M2}.

\section{Notation}   \label{section:normalization}
Let ${\mathbbm{k}}$ be a field.
Let $R={\mathbbm{k}}[x_1,\ldots,x_m]$ be a polynomial ring with the standard grading $\deg x_i=1$ for $i=1,\ldots,m$.
Denote by $R_i$ the degree $i$ homogeneous component of $R$ and by $R_+$ the irrelevant ideal $
(x_1,\ldots,x_m)$ of $R$,  that is $R_+=\oplus_{i \geq 1} R_i$.
Suppose that $g_1,\ldots,g_m \in R_+$ is a homogeneous $R$-regular sequence of length $m$. 
It defines an Artinian complete intersection ideal $I=(g_1,\ldots,g_m)$ of $R$, and the Artinian Gorenstein ring $R/I$.
Let $\pi:R \rightarrow R/I$ be the natural projection map.
The Hilbert series $\Hilb(R/I, t)$ of $R/I$ is computed as
\[
\Hilb(R/I, t) = \Hilb(R, t) \prod_{i=1}^m (1-t^{\deg g_i}) = \prod_{i=1}^m \sum_{j=0}^{\deg g_i -1} t^j.
\]
Hence the top degree (also known as socle degree) of $R/I$ is
\begin{equation}
    s := -m  + \sum_{i=1}^m \deg g_i,
\end{equation}
and we have $\dim_{\mathbbm{k}} (R/I)_s =1$.

We denote by 
\[
           \mathrm{vol} : (R/I)_s \to {\mathbbm{k}}
\]           
 the  homogeneous residue isomorphism discussed in \cite[Section~1.5.6]{Cattani2005}.
It can be defined as follows:

We fix an $m \times m$ matrix $N$  with entries  in $R$ satisfying
\begin{equation} \label{eq:coeffmatrix}
\begin{pmatrix}
    g_1 \\ \vdots \\ g_m
\end{pmatrix}
=
N
\begin{pmatrix}
    x_1 \\ \vdots \\ x_m
\end{pmatrix},
\end{equation}
and we set 
\begin{equation} \label{eq:z_0}
z_0=\det N.
\end{equation}
Note that $\deg z_0=s$.   The following proposition is  well known, 
see for example    \cite[Lemma~E.19]{Kunz}.

\begin{prop} \label{prop:well-def}
 The element     $\pi(z_0)$  of    $R/I$ is independent of the choice of the 
 matrix $N$ in Equation~(\ref{eq:coeffmatrix}). Moreover,   $\pi(z_0) \not= 0$.
\end{prop}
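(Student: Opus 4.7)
The plan is to prove both statements by Koszul-syzygy and Cramer's-rule arguments, reducing the nonvanishing to the monomial special case $g_i = x_i^{d_i}$ via a flat deformation.

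For the independence part, I would first observe that if $N$ and $N'$ both satisfy~\eqref{eq:coeffmatrix}, then $M := N' - N$ satisfies $Mx = 0$. Since $x_1, \ldots, x_m$ is an $R$-regular sequence, the Koszul complex is exact, so the module of first syzygies of $x$ is generated by the Koszul syzygies $x_j e_i - x_i e_j$ for $i < j$; hence each row of $M$ is an $R$-linear combination of such syzygies. By multilinearity of the determinant in rows (applied row-by-row, with further linear expansion within each row), one reduces to the single-step case: showing that replacing a single row $k$ of $N$ by a single scalar multiple $c(x_j e_i - x_i e_j)$ changes $\det N$ by an element of $I$. Cofactor expansion along row $k$ shows that this change equals
\[
c \bigl((-1)^{k+i} x_j M_{ki}(N) - (-1)^{k+j} x_i M_{kj}(N)\bigr),
\]
where $M_{ab}(N)$ denotes the $(a,b)$-minor of $N$. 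The key identity is then obtained by applying Cramer's rule to the auxiliary matrix $\tilde{N}$ obtained from $N$ by replacing row $k$ with the standard basis vector $e_i$; since $\tilde{N} x = (g_1, \ldots, g_{k-1}, x_i, g_{k+1}, \ldots, g_m)^T$, Cramer's rule yields $(-1)^{k+i} x_p M_{ki}(N) - (-1)^{k+p} x_i M_{kp}(N) \in I$ for every $p$, and the specialization $p = j$ gives the desired membership.

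For the nonvanishing part, Cramer's rule applied directly to $Nx = g$ gives $x_l \cdot \det N \in I$ for every $l$, so $\pi(z_0) \in \mathrm{Ann}_{R/I}(R_+) = \Soc(R/I)$. Since $R/I$ is Artinian Gorenstein with $\dim_{\mathbbm{k}} (R/I)_s = 1$, this socle equals $(R/I)_s$, and $\pi(z_0)$ is either zero or a generator. To rule out the former, I would specialize to $g_i = x_i^{d_i}$ and take $N = \diag(x_1^{d_1-1}, \ldots, x_m^{d_m-1})$, for which $z_0 = \prod_i x_i^{d_i-1}$ is visibly nonzero modulo $(x_1^{d_1}, \ldots, x_m^{d_m})$. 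A flat deformation in the universal family of homogeneous regular sequences of fixed degrees $(d_1, \ldots, d_m)$, combined with the polynomial dependence of $z_0$ on a canonical choice of $N$ (and hence on the coefficients of $g$), then propagates nonvanishing from the monomial case to the general case.

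The principal obstacle is the nonvanishing in full generality: while independence is an elementary syzygy-plus-Cramer computation and the monomial specialization is transparent, propagating nonvanishing across the deformation requires carefully tracking the one-dimensional socle in a flat family, or alternatively invoking the Scheja--Storch residue theory as in \cite[Lemma~E.19]{Kunz}.
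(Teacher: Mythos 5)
Your independence argument is essentially complete and correct: interpolating row by row between $N$ and $N'$ (each intermediate matrix still satisfies Equation~(\ref{eq:coeffmatrix})), expanding the changed row over the Koszul syzygies $x_je_i-x_ie_j$ (which do generate the first syzygy module of the regular sequence $x_1,\dots,x_m$), and verifying the resulting cofactor identity via the adjugate of the auxiliary matrix $\tilde N$ all check out --- I verified that $(-1)^{k+i}x_jM_{ki}(N)-(-1)^{k+j}x_iM_{kj}(N)\in I$ does follow from $\mathrm{adj}(\tilde N)\tilde N=(\det\tilde N)\,\mathrm{id}$ applied to the vector $x$. Likewise $\mathrm{adj}(N)g=(\det N)x$ correctly places $\pi(z_0)$ in $\Soc(R/I)=(R/I)_s$. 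Note that the paper gives no proof of this proposition at all; it simply cites \cite[Lemma~E.19]{Kunz}, so your write-up of the independence half is more informative than the source.

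The genuine gap is exactly where you flag it: the deformation step cannot establish nonvanishing for an \emph{arbitrary} fixed regular sequence. Nonvanishing of $\pi(z_0)$ is an open condition on the coefficients of $g_1,\dots,g_m$ (the vanishing locus of the induced section of the rank-one sheaf with fibres $(R/I)_s$ over the regular-sequence locus is Zariski closed, by semicontinuity), so nonvanishing at the monomial point $g_i=x_i^{d_i}$ propagates only to a dense open subset of the parameter space, not to every fibre; a one-parameter path from a given $g$ to the monomial sequence fares no better, since the finitely many bad parameter values could include your starting point. What is actually needed to rule out $\pi(z_0)=0$ is the reverse inclusion $I:R_+\subseteq I+(z_0)$ (Wiebe's transition theorem, proved by comparing the Koszul complexes of $x$ and $g$ via $N$, whose top exterior power is multiplication by $\det N$): granting it, $z_0\in I$ would force $\Soc(R/I)=0$, impossible for a nonzero Artinian ring. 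That inclusion is precisely the content of \cite[Lemma~E.19]{Kunz}, so your proposed fallback is the right one --- but as written it is a citation rather than a proof, and without it the nonvanishing half of the proposition remains open.
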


By definition,  $\mathrm{vol}:(R/I)_s \rightarrow {\mathbbm{k}}$ 
is the unique  linear isomorphism 
of $1$-dimensional  ${\mathbbm{k}}$-vector spaces   satisfying 
\begin{equation} \label{eq:normalization}
         \mathrm{vol}(\pi(z_0))=1.
\end{equation}

\begin{remark} 
It is not relevant to the present paper, but we remark that, in general, 
the isomorphism $\mathrm{vol}$ depends on both the choice of the homogeneous 
generating set $g_1,\ldots,g_m$ of $I$ and its ordering. 
\end{remark}

\begin{remark} 
Our point of view  is that,    as discussed in 
\cite[Subsection~5.1]{APP2},  the  linear isomorphism
 $\mathrm{vol}$ is a case of  Kustin-Miller normalization of volume maps
\cite{APP2}. This is the reason
we are making the choice to use the notation  $\mathrm{vol}$ instead of 
 $ \mathrm{res}$. 
\end{remark}

\section{Parseval-Rayleigh Identities}

The main aim of the present section is the proof of Theorem~\ref{thm:Parseval}.

\subsection{A key ideal membership}
We assume that we are in the same setting as in Section~\ref{subsec:Parseval statement}.
The following is a key ingredient in the proof of Theorem~\ref{thm:Parseval}.
\begin{lemma} \label{lem:member}  We have
  \[
      g_1^{p-1}\cdots g_m^{p-1}z_0 - x_1^{p-1}\cdots x_m^{p-1} z_0^p  \;  \in (g_1^p,\ldots,g_m^p),
 \]
 where $z_0$ was defined in Equation~(\ref{eq:z_0}).
\end{lemma}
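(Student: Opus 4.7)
The plan is to reinterpret the statement as an identity between two socle representatives of the Frobenius-twisted complete intersection $R/I^{[p]}$, where $I^{[p]}:=(g_1^p,\ldots,g_m^p)$. Since $g_1,\ldots,g_m$ is a homogeneous $R$-regular sequence of length $m$ and $R$ is Cohen--Macaulay, the sequence $g_1^p,\ldots,g_m^p$ is also regular (both ideals are $R_+$-primary of the same codimension). Hence Proposition~\ref{prop:well-def} applies to $I^{[p]}$ together with the column vector $(x_1,\ldots,x_m)^T$: for any $m\times m$ matrix $M$ with entries in $R$ satisfying $(g_1^p,\ldots,g_m^p)^T = M\,(x_1,\ldots,x_m)^T$, the class of $\det M$ modulo $I^{[p]}$ is independent of the choice of $M$.

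The heart of the argument is to exhibit two matrices $M$ and $M'$ of this form whose determinants are precisely the two terms appearing in the lemma. For the first, I write
\[
g_i^p \;=\; g_i^{p-1}\cdot g_i \;=\; \sum_j \bigl(g_i^{p-1}N_{ij}\bigr)\, x_j,
\]
so $M_{ij}=g_i^{p-1}N_{ij}$ works; pulling $g_i^{p-1}$ out of the $i$-th row gives $\det M = g_1^{p-1}\cdots g_m^{p-1}\det N = g_1^{p-1}\cdots g_m^{p-1}z_0$. For the second, I invoke Frobenius in characteristic $p$:
\[
g_i^p \;=\; \Bigl(\sum_j N_{ij}x_j\Bigr)^{\!p} \;=\; \sum_j N_{ij}^p\, x_j^p \;=\; \sum_j \bigl(N_{ij}^p x_j^{p-1}\bigr)\, x_j,
\]
so $M'_{ij}=N_{ij}^p\, x_j^{p-1}$ is another matrix of the required form. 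Pulling $x_j^{p-1}$ out of the $j$-th column of $M'$ leaves the matrix with entries $N_{ij}^p$, whose determinant equals $(\det N)^p = z_0^p$ by Frobenius applied to the polynomial expression for $\det$; thus $\det M' = x_1^{p-1}\cdots x_m^{p-1}\, z_0^p$.

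Applying Proposition~\ref{prop:well-def} to $R/I^{[p]}$ with the two matrices $M$ and $M'$, the classes of $\det M$ and $\det M'$ agree modulo $I^{[p]}$, which is exactly the claim of the lemma.

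I do not anticipate a genuine obstacle in carrying out this plan: once one places the computation inside $R/I^{[p]}$ and recognizes both sides as determinants of matrices intertwining $(g_i^p)$ with $(x_j)$, the rest follows immediately from the well-definedness result. The only step where the characteristic $p$ hypothesis is truly essential is the Frobenius identity $(\sum_j N_{ij}x_j)^p=\sum_j N_{ij}^p x_j^p$, which is what makes the matrix $M'$ available and thereby produces the term $x_1^{p-1}\cdots x_m^{p-1}z_0^p$.
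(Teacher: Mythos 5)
Your proposal is correct and is essentially identical to the paper's proof: the paper also passes to $B=R/(g_1^p,\ldots,g_m^p)$, writes the $g_i^p$ in the two ways $\diag(g_1^{p-1},\ldots,g_m^{p-1})N$ and $N^{(p)}\diag(x_1^{p-1},\ldots,x_m^{p-1})$ applied to $(x_1,\ldots,x_m)^T$, invokes Proposition~\ref{prop:well-def}, and checks $\det N^{(p)}=(\det N)^p$ via Frobenius. The only cosmetic difference is your justification that $g_1^p,\ldots,g_m^p$ is regular (same-codimension $R_+$-primary ideals versus the paper's citation of \cite[Exercise~1.1.10]{BH}), both of which are fine.
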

\begin{proof}
The idea of the proof is as follows.
Think of Lemma~\ref{lem:member} as an equality in the quotient ring $B=R/(g_1^p,\ldots,g_m^p)$.
Since, by   \cite[Exercise~1.1.10]{BH}, $g_1^p,\ldots,g_m^p$ is a regular sequence 
of $R$, $(g_1^p,\ldots,g_m^p)$ is an Artinian complete intersection ideal.
The top degree of $B$ is 
\[
       p(\sum_{i=1}^m \deg g_i)-m =ps+(p-1)m.
\]       
This
is equal to the degrees of both $g_1^{p-1}\cdots g_m^{p-1}z_0$  and  $x_1^{p-1}\cdots x_m^{p-1} z_0^p$.
To deduce that two values in the top degree are equal, we use Proposition~\ref{prop:well-def} for $B$.
We shall now give the argument.

We write $g_1^p,\ldots,g_m^p$ as an $R$-coefficient sum of $x_1,\ldots,x_m$ in two different ways.
The first way is
\begin{align} \label{eq:rep1}
\begin{pmatrix}
    g_1^p \\ \vdots \\g_m^p 
\end{pmatrix}
=
\diag(g_1^{p-1},\ldots,g_m^{p-1})
\begin{pmatrix}
    g_1 \\ \vdots \\g_m 
\end{pmatrix}
=
\diag(g_1^{p-1},\ldots,g_m^{p-1}) N
\begin{pmatrix}
    x_1 \\ \vdots \\x_m 
\end{pmatrix}.
\end{align}
To get the second way, since we are working over a field of characteristic $p$ it holds that
\[
    g_i^p =(n_{i1}x_1+\cdots +n_{im}x_m)^p = n_{i1}^p x_1^p +\cdots +n_{im}^px_m^p,
\]
where $n_{ij}$ is the $(i,j)$-entry of $N$. Hence, we have
\begin{align} \label{eq:repa}
\begin{pmatrix}
    g_1^p \\ \vdots \\g_m^p 
\end{pmatrix}
=
N^{(p)}
\begin{pmatrix}
    x_1^p \\ \vdots \\x_m^p 
\end{pmatrix}
=
N^{(p)}
\diag(x_1^{p-1},\ldots,x_m^{p-1})
\begin{pmatrix}
    x_1 \\ \vdots \\x_m 
\end{pmatrix},
\end{align}
where $N^{(p)}$ is the $m \times m$  matrix with  $(i,j)$-entry equal to  $n_{ij}^p$.
By Proposition~\ref{prop:well-def} for $B$ and $g_1^p, \ldots, g_m^p$, we deduce that
\begin{equation}
\pi_B\left(g_1^{p-1}\cdots g_m^{p-1}\;  (\det N) \right) = \pi_B \left((\det N^{(p)}) \;  x_1^{p-1} \cdots x_m^{p-1}\right),
\end{equation}
where $\pi_B : R \rightarrow B$ is the natural projection. To complete the proof, 
it remains to check that $\det N^{(p)}=(\det N)^p$, which  can be done as follows:
\begin{align*}
    \det N^{(p)} = \sum_{\sigma \in {\mathfrak {S}}_m} \sgn(\sigma) n_{1\sigma(1)}^p\cdots n_{m\sigma(m)}^p
    &=\sum_{\sigma \in {\mathfrak {S}}_m} \sgn(\sigma)^p n_{1\sigma(1)}^p\cdots n_{m\sigma(m)}^p \\
    &=\left(\sum_{\sigma \in {\mathfrak {S}}_m} \sgn(\sigma) n_{1\sigma(1)}\cdots n_{m\sigma(m)}\right)^p
    =(\det N)^p,
\end{align*}
where $\mathfrak{S}_m$ is the group of permutations of $\{1,\ldots,m\}$.
\end{proof}

\subsection{A useful lemma} 
For a field ${\mathbbm{k}}$ of characteristic $p > 0$, the Frobenius map 
${\mathbbm{k}} \rightarrow {\mathbbm{k}}; a \mapsto a^p$ is a 
${\mathbbm{k}}$-linear map.    Moreover,  it holds that
\begin{equation} \label{eq:contract power}
    u^p \circ w^p = (u \circ w)^p
\end{equation}
for any $u$ and $w$ homogeneous of the same degree. To see this, by the linearity of the
Frobenius map, both sides of Equation~(\ref{eq:contract power}) are bilinear. Hence, 
it suffices to check  the equality for $u,w$  monic monomials, and this
clearly holds.

For the proof of Theorem~\ref{thm:Parseval}, the following lemma is useful.

\begin{lemma} \label{lem:vanish}
   Assume the  field ${\mathbbm{k}}$ has characteristic $p > 0$.
    Let $g$ be a homogeneous polynomial in $R={\mathbbm{k}}[x_1,\ldots,x_m]$ of degree $d$, and 
    let $s$ be a non-negative integer with $s \geq d$.
 We denote by $(g)_s \subset R$ the $s$-th graded part of the homogeneous ideal  $(g)$.
    If $\phi:R_s \rightarrow {\mathbbm{k}}$ is a ${\mathbbm{k}}$-linear map with $(g)_s \subseteq \ker \phi$, then
    \begin{align} \label{eq:vanish}
        \sum_{u \in {\mathcal M}_s} ((x_1^{p-1}\cdots x_m^{p-1} u^p) \circ (g^p v)) (\phi (u))^p = 0
    \end{align}
   for all  homogeneous polynomials $v \in R$  of degree $m(p-1)+sp-dp$.
\end{lemma}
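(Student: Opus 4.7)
The plan is a direct computation exploiting two features of characteristic $p$: Frobenius $a \mapsto a^p$ is additive, and $g^p = \sum_\alpha c_\alpha^p x^{p\alpha}$ whenever $g = \sum_\alpha c_\alpha x^\alpha \in R$. I would first observe that for a monomial $x^\delta \in R_{m(p-1)+sp}$, the pairing $(x_1^{p-1}\cdots x_m^{p-1} u^p) \circ x^\delta$ vanishes unless each $\delta_i \equiv p-1 \pmod p$, in which case exactly one $u = x^\gamma$ with $p\gamma_i = \delta_i - (p-1)$ contributes $1$. Hence the left-hand side of (\ref{eq:vanish}) registers only those monomials of $g^p v$ whose exponent vectors are component-wise congruent to $(p-1,\ldots,p-1)$ modulo $p$.

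By $\mathbbm{k}$-linearity in $v$, it suffices to consider $v = x^\beta$ a single monomial. Expanding $g^p v = \sum_\alpha c_\alpha^p x^{p\alpha + \beta}$, one sees that $p\alpha + \beta$ is componentwise $\equiv p-1 \pmod p$ precisely when $\beta$ itself is, i.e.\ $\beta = p\mu + (p-1,\ldots,p-1)$ for a multi-index $\mu \geq 0$; in every other case the left-hand side of (\ref{eq:vanish}) vanishes trivially. In this remaining case, matching degrees against $\deg v = m(p-1) + sp - dp$ forces $|\mu| = s - d$.

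For $u = x^\gamma$ with $|\gamma| = s$, the contraction $(x_1^{p-1}\cdots x_m^{p-1} u^p) \circ (g^p v)$ is the coefficient of $x^{p\gamma + (p-1,\ldots,p-1)}$ in $g^p v$, which equals $c_{\gamma-\mu}^p$ if $\gamma \geq \mu$ entry-wise and zero otherwise. Reindexing by $\alpha = \gamma - \mu$ and invoking Frobenius additivity,
\begin{align*}
\sum_{u \in \mathcal{M}_s} \bigl((x_1^{p-1}\cdots x_m^{p-1} u^p) \circ (g^p v)\bigr)\, (\phi(u))^p
&= \sum_{\alpha\,:\, |\alpha| = d} c_\alpha^p\, \phi(x^\alpha x^\mu)^p \\
&= \Bigl(\sum_\alpha c_\alpha\, \phi(x^\alpha x^\mu)\Bigr)^{p} = \phi(g \cdot x^\mu)^p.
\end{align*}
Since $g \cdot x^\mu \in (g)_s \subseteq \ker \phi$, this is $0$. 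The only delicate step is the degree bookkeeping that pins down $|\mu| = s - d$; everything else is direct manipulation once the Frobenius identities for $g^p$ and for the pairing are in place.
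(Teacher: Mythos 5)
Your proof is correct and follows essentially the same route as the paper's: reduce to a monomial $v$, observe that the contraction kills everything unless the exponents of $v$ are componentwise $\equiv p-1 \pmod p$, write $v = x^{(p-1)\mathbf{1}}v_0^p$ (your $x^\mu$ is the paper's $v_0$), and collapse the sum via Frobenius additivity to $\phi(g v_0)^p = 0$. The only cosmetic difference is that you track coefficients explicitly where the paper uses the identities $u^p \circ w^p = (u\circ w)^p$ and $\sum_{u}(u\circ w)u = w$.
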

\begin{proof}
    It suffices to prove Equality~(\ref{eq:vanish}) when $v$ is a monic monomial. 
Let $v=x_1^{j_1}\cdots x_m^{j_m}$.
    
    Suppose that $j_k \not \equiv p-1$ (mod $p$) for some $k$, with $1 \leq k \leq m$.
    Write $g^p$ as a sum of  monomials. If a monomial $x_1^{l_1} \cdots x_m^{l_m}$ appears 
 as a  summand then all $l_1, \ldots, l_m$ must be multiples of $p$. Thus, $g^pv$ is supported 
on those monomials $x_1^{l_1} \cdots x_m^{l_m}$ with $l_t \equiv j_t$ (mod $p$) for all
$1 \leq t \leq m$.
    Thus, $(x_1^{p-1}\cdots x_m^{p-1} u^p) \circ (g^p v)=0$ for any $u \in {\mathcal M}_s$.
        
    Suppose now that $j_t \equiv p-1$ (mod $p$) for all $1 \leq t \leq m$.
    Then, we can write $v=x_1^{p-1}\cdots x_m^{p-1} {v_0}^p$ for some monic monomial $v_0$ of degree $s-d$. We have
    \begin{align*} 
        \sum_{u \in {\mathcal M}_s} ((x_1^{p-1}\cdots x_m^{p-1} u^p) \circ (g^p v)) (\phi (u))^p 
        =&\sum_{u \in {\mathcal M}_s} ((u^p) \circ (g^p v_0^p)) (\phi (u))^p & \notag \\ 
        =& \sum_{u \in {\mathcal M}_s} (u\circ (gv_0))^p (\phi(u))^p & \notag \\
        =& \left(\sum_{u \in {\mathcal M}_s} (u\circ (gv_0)) \phi(u)) \right)^p & \notag \\
        =& \left (\phi\left(\sum_{u \in {\mathcal M}_s} (u\circ (gv_0)) u \right) \right)^p \\
        =& \left (\phi (gv_0) \right)^p \\
       =&  0,
    \end{align*}
     where we used that         
     $\sum_{u \in {\mathcal M}_s} (u\circ w) u =  w$   for all $w \in R_s$   and 
    $\phi ( gv_0 ) = 0$  by the assumption  $(g)_s \subseteq \ker \phi$.  
\end{proof}

\subsection{Proof of Theorem~\ref{thm:Parseval}}
We are now ready to prove Theorem~\ref{thm:Parseval}
\begin{proof}[Proof of Theorem~\ref{thm:Parseval}]
    Since both sides of Equation~(\ref{eq:PI}) are ${\mathbbm{k}}$-linear with respect to $w$,
    it suffices to check Equation~(\ref{eq:PI}) on a spanning set of $R_s$.
    Since, by Proposition~\ref{prop:well-def}, the $1$-dimensional ${\mathbbm{k}}$-vector space 
 $(R/I)_s$ is spanned by $\pi(z_0)$, it suffices 
   to prove Equation~(\ref{eq:PI}) for the following two cases: (i)~ $w=g_i f$, where $f$ is a polynomial 
 of degree $s-\deg g_i$, and (ii) $w=z_0$.

    For case (i), without loss of generality, we may assume that $i=1$. 
   By applying Lemma~\ref{lem:vanish} for $v=g_2^{p-1}\cdots g_m^{p-1}f$ and $g=g_1$, we see that 
    the right-hand side of Equation~(\ref{eq:PI}) is $0$, and hence Equation~(\ref{eq:PI}) holds.

    For case (ii), by Lemma~\ref{lem:member}, we have that, for  all $i$ with $1 \leq i  \leq m$, there 
    exist  $\lambda_i \in R$  with $\deg \lambda_i = m(p-1)+sp-p\deg g_i$ such that
    \[
        g_1^{p-1}\cdots g_m^{p-1}z_0 = x_1^{p-1} \cdots x_m^{p-1} z_0^p + \sum_{i=1}^m \lambda_i g_i^p.
    \]
    Thus, the right-hand side of Equation~(\ref{eq:PI}) for $w=z_0$ is $A+\sum_{i=1}^m B_i$, where
    \begin{align*} 
        &A=\sum_{u \in {\mathcal M}_s} 
            ((x_1^{p-1}\cdots x_m^{p-1} u^p)\circ(x_1^{p-1}\cdots x_m^{p-1} z_0^p)) (\mathrm{vol}(\pi(u)))^p,
        \\
        &B_i=\sum_{u \in {\mathcal M}_s} ((x_1^{p-1}\cdots x_m^{p-1} u^p)\circ(\lambda_i g_i^p)) (\mathrm{vol}(\pi(u)))^p.
    \end{align*}
    We have 
    \begin{align*}  
     A=& \sum_{u \in {\mathcal M}_s} (u^p\circ z_0^p) (\mathrm{vol}(\pi(u)))^p =
        \left( \sum_{u \in {\mathcal M}_s} (u\circ z_0) \mathrm{vol}(\pi(u)) \right)^p  \\
      =&  \left(  (\mathrm{vol} \circ \pi) (\sum_{u \in {\mathcal M}_s}(u\circ z_0) u) \right)^p 
      = ((\mathrm{vol} \circ \pi) (z_0))^p=1, 
    \end{align*}
    where we used again  that $\; \sum_{u \in {\mathcal M}_s} (u\circ w) u = w \;$ for all 
    $w \in R_s$. 
    Moreover,  by  Lemma~\ref{lem:vanish} we have that      $B_i=0$ for all $1 \leq i \leq m$.
    Hence, when $w = z_0$  both sides of Equation~(\ref{eq:PI}) are equal to $1$.
\end{proof}

\section{Differential identities, $p$-anisotropy and 
              Lefschetz property for generic complete intersections}  \label{sec!dislp}

The present  section contains applications of Theorem~\ref{thm:Parseval}.  In different contexts, analogous
 applications of Parseval-Reyleigh identities   have appeared in  \cite{APP}  and  \cite{KLS}.

Let $\mathbb{N}=\{0,1,2,\ldots\}$ be the set of all nonnegative integers. 
For $\bm{r} =  (r_1, \dots , r_m)  \in \mathbb{N}^m$,  we set  $|\bm{r}|:=r_1+\cdots+r_m$.
For a nonnegative integer $k$,  we set  $\mathbb{N}^m_k:=\{\bm{r}\in\mathbb{N}^m:|\bm{r}|=k\}$.
\subsection{Generic complete intersection reduction}
\begin{defi}
Let ${\mathbbm{k}}$ be a field, and let $R={\mathbbm{k}}[x_1,\ldots,x_m]$ be the  polynomial ring
over ${\mathbbm{k}}$ in $m$ variables.
Let $d_1,\ldots,d_m$ be positive integers.
Let
\begin{equation}
    \mathcal{I}_0=\{(i,\bm{r}):i=1,\ldots,m,\, \bm{r}\in \mathbb{N}^m_{d_i}\},
\end{equation}
and let 
\[
{\mathbbm{K}}={\mathbbm{k}}(a_{i,\bm{r}}:(i,\bm{r}) \in\mathcal{I}_0),
\]
where $a_{i,\bm{r}}$ are new variables. Clearly ${\mathbbm{K}}$ is a purely
 transcendental field extension of ${\mathbbm{k}}$.  
In the polynomial ring ${\mathbbm{K}}[x_1,\ldots,x_m]$ we set
\[
g_i = \sum_{\bm{r} \in \mathbb{N}^m_{d_i}} a_{i,\bm{r}} \bm{x}^{\bm{r}} \qquad \text{ for } i=1,\ldots,m,
\]
where $\bm{x}^{\bm{r}}= \prod_{t=1}^m x_t^{r_t}$.
The quotient ring ${\mathcal A}={\mathbbm{K}}[x_1,\ldots,x_m]/(g_1,\ldots,g_m)$ is called 
the \emph{generic $(d_1,\ldots,d_m)$-complete intersection reduction} of $R$.    
\end{defi}
We call each $a_{i,\bm{r}}$ an \emph{auxiliary indeterminate}, and $\mathcal{I}_0$ the \emph{index set}.

\subsection{Differential identities} \label{sec:diff}
For  the remaining of the present  section we assume that ${\mathbbm{k}}$ is a field of positive characteristic $p$.
Let ${\mathcal A} = {\mathbbm{K}}[x_1,\ldots,x_m]/(g_1,\ldots,g_m)$ be the generic $(d_1,\ldots,d_m)$-complete 
intersection reduction of $R={\mathbbm{k}}[x_1,\ldots,x_m]$.
Let $s=-m + d_1+\cdots +d_m$ be the top degree of ${\mathcal A}$, and 
let $\mathrm{vol}:{\mathcal A}_s \rightarrow {\mathbbm{K}}$ be the residue map. 
Let $\pi:{\mathbbm{K}}[x_1,\ldots,x_m] \rightarrow {\mathcal A}$ be the natural projection.

By a multisubset of a set $S$, we mean a multiset consisting of elements of $S$.
\begin{defi}
    We say that a multisubset $\mathcal{I}$ of the index set $\mathcal{I}_0$  has the Property~($*$) if 
    \begin{quote}
        ($*$)  $\; \;$ For each $i=1,\ldots,m$, there are exactly $p-1$ elements of the form $(i,\bm{r})$ in $\mathcal{I}$.
    \end{quote}
\end{defi}

For $(i,\bm{r}) \in \mathcal{I}_0$, we denote by  
$\partial_{a_{i,\bm{r}}}: {\mathbbm{K}} \rightarrow {\mathbbm{K}}$ 
the differential operator   $\frac{\partial}{\partial{a_{i,\bm{r}}}}$. In other words,
$\partial_{a_{i,\bm{r}}}$ 
is the partial derivative with respect to the auxiliary indeterminate $a_{i,\bm{r}}$.
For a finite multisubset $\mathcal{I}=\{\!\{(i_1,\bm{r}_1),\ldots,(i_k,\bm{r}_k)\}\!\}$ of $\mathcal{I}_0$ 
we define the differential operator $\partial^\mathcal{I}$ by
\[
    \partial^\mathcal{I}:=\partial_{a_{i_1,\bm{r}_1}} \circ \cdots \circ \partial_{a_{i_k,\bm{r}_k}}
\]
and  the monic monomial $\bm{x}^\mathcal{I}$ by
\[
    \bm{x}^\mathcal{I} := \bm{x}^{\bm{r}_1} \cdots \bm{x}^{\bm{r}_k}.
\]
It is clear that  if $\mathcal{I}$ satisfies Property~($*$) then 
$\partial^\mathcal{I}$ is a differential operator of order $(p-1)m$ and 
$\bm{x}^\mathcal{I}$ is a monomial of degree $(p-1)(m+s) = (p-1) \sum_{i=1}^m \deg g_i$.

Moreover,  given $(i,\bm{r}) \in \mathcal{I}_0$ and a finite multisubset $\mathcal{I}$ of $\mathcal{I}_0$,
we denote by  $m_{\mathcal{I}, (i,\bm{r})}$ the number of appearances of $(i,\bm{r})$ in  $\mathcal{I}$,
and we set
\[
       \mathcal{I} \, ! = \prod_{(i,\bm{r}) \in \mathcal{I}_0}   (m_{\mathcal{I}, (i,\bm{r})})!
\]
where $0! =1$ and $n! = n ((n-1)!)$ when $n \geq 1$.

For a monic Laurent  monomial 
\[
      u  \in {\mathbbm{K}}[x_1,\ldots,x_m, x_1^{-1}, \ldots, x_m^{-1}],
\]
we denote by  $u^{1/p}$ the  (unique) monic  Laurent   monomial 
whose $p$-th power is equal to $u$ if such a monomial exists, and $0$ otherwise.
In other words, if $u=\prod_{i=1}^m x_i^{a_i}$ with $a_i \in \mathbb{Z}$,
then $u^{1/p} =\prod_{i=1}^m x_i^{a_i/p}$
if $p$ divides $a_i$ for all $i$ and $u^{1/p} =0 $ if there exists $i$ such that $p$ 
does not divide $a_i$.  Moreover, we denote by 
\[
   \phi :  {\mathbbm{K}}[x_1,\ldots,x_m, x_1^{-1}, \ldots, x_m^{-1}] \to 
              {\mathbbm{K}}[x_1,\ldots,x_m]
\]
the unique ${\mathbbm{K}}$-linear map such that
$ \phi (\bm{x}^{\bm{r}}) = \bm{x}^{\bm{r}}$ if $\bm{r} \in \mathbb{N}^m$ and $0$ otherwise. 
Hence, the restriction of $\phi$ to $ {\mathbbm{K}}[x_1,\ldots,x_m]$ is the identity map,
and the kernel of $\phi$ is spanned by all Laurent monomials that have at least
one negative exponent.

In  our present setting where  ${\mathcal A}$  is a  generic complete intersection reduction,   
we can transform  the  Parseval-Rayleigh identities of  Theorem~\ref{thm:Parseval} 
to differential identities  as follows. 
\begin{theorem}[\bf{Differential identities}] \label{thm:diff}	
    For $\bm{s} \in \mathbb{N}^m$ with $|\bm{s}|=s$ and a multisubset ${\mathcal I}$ of ${\mathcal I}_0$ with 
the Property~($*$), we have
    \begin{align} \label{eq:diff}
        \partial^{\mathcal{I}} ( (\mathrm{vol} \circ \pi)(\bm{x}^{\bm{s}})) =  (-1)^m
              \left(  (\mathrm{vol}\circ \pi \circ \phi )  
               \left(\left(\bm{x}^{\bm{s}}\bm{x}^{{\mathcal I}}\bm{x}^{(1-p)\mathbf{1}}\right)^{1/p}\right) \right)^p.
    \end{align}
    Here, $\mathbf{1}=(1,\ldots,1) \in \mathbb{N}^m$ is the vector with all coordinates $1$.
\end{theorem}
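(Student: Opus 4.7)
The plan is to apply the Parseval-Rayleigh identity of Theorem~\ref{thm:Parseval} with $w=\bm{x}^{\bm{s}}$ and then differentiate both sides using $\partial^{\mathcal{I}}$. The starting point reads
\[
\mathrm{vol}(\pi(\bm{x}^{\bm{s}})) = \sum_{u \in \mathcal{M}_s} \bigl( (x_1^{p-1}\cdots x_m^{p-1}\, u^p) \circ (g_1^{p-1}\cdots g_m^{p-1}\, \bm{x}^{\bm{s}}) \bigr)\,(\mathrm{vol}(\pi(u)))^p.
\]
Since we work in characteristic $p$, each $(\mathrm{vol}(\pi(u)))^p$ is a $p$-th power in ${\mathbbm{K}}$ and is therefore annihilated by every $\partial_{a_{i,\bm{r}}}$, while the monomials $x_1^{p-1}\cdots x_m^{p-1}\, u^p$ and $\bm{x}^{\bm{s}}$ do not involve the auxiliary indeterminates at all. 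Thus $\partial^{\mathcal{I}}$ acts only on the factor $g_1^{p-1}\cdots g_m^{p-1}$ inside the contraction.

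Next I would compute $\partial^{\mathcal{I}}(g_1^{p-1}\cdots g_m^{p-1})$ explicitly. Because $g_i$ involves only the variables $\{a_{i,\bm{r}}\}_{\bm{r}}$ and is linear in them, the operator factors as
\[
\partial^{\mathcal{I}}\bigl(g_1^{p-1}\cdots g_m^{p-1}\bigr) \;=\; \prod_{i=1}^{m} \partial^{\mathcal{I}_i}\bigl(g_i^{p-1}\bigr),
\]
where $\mathcal{I}_i$ is the submultiset of $\mathcal{I}$ with first coordinate $i$. Property~$(*)$ yields $|\mathcal{I}_i|=p-1$, which matches the total degree of $g_i^{p-1}$ in $\{a_{i,\bm{r}}\}$. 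Via the multinomial expansion, the surviving term is the unique one whose $a_{i,\bm{r}}$-multiplicities match those of $\mathcal{I}_i$, and cancelling the multinomial coefficient against the $\prod_{\bm{r}} m_{\mathcal{I}_i,(i,\bm{r})}!$ produced by the iterated derivative gives $\partial^{\mathcal{I}_i}(g_i^{p-1}) = (p-1)!\,\bm{x}^{\mathcal{I}_i}$. By Wilson's theorem, $(p-1)! \equiv -1 \pmod p$, so multiplying over $i$ produces the sign $(-1)^m$ and the monomial $\bm{x}^{\mathcal{I}}$.

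Substituting and pulling the scalar out gives
\[
\partial^{\mathcal{I}}\bigl(\mathrm{vol}(\pi(\bm{x}^{\bm{s}}))\bigr) = (-1)^m \sum_{u \in \mathcal{M}_s} \bigl( (x_1^{p-1}\cdots x_m^{p-1}\, u^p) \circ (\bm{x}^{\mathcal{I}}\,\bm{x}^{\bm{s}}) \bigr)\,(\mathrm{vol}(\pi(u)))^p.
\]
The contraction is now a Kronecker delta between monic monomials: it vanishes unless $x_1^{p-1}\cdots x_m^{p-1}\, u^p = \bm{x}^{\mathcal{I}}\, \bm{x}^{\bm{s}}$, equivalently $u^p = \bm{x}^{\bm{s}}\,\bm{x}^{\mathcal{I}}\,\bm{x}^{(1-p)\mathbf{1}}$. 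Such a $u \in \mathcal{M}_s$ exists (and then is unique) precisely when all exponents of this Laurent monomial are non-negative and divisible by $p$, which is exactly the condition for $\phi\bigl((\bm{x}^{\bm{s}}\bm{x}^{\mathcal{I}}\bm{x}^{(1-p)\mathbf{1}})^{1/p}\bigr)$ to be non-zero, in which case it equals that $u$. This rewrites the sum as the right-hand side of Equation~(\ref{eq:diff}).

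I expect the main obstacle to be the computation $\partial^{\mathcal{I}_i} g_i^{p-1} = (p-1)!\,\bm{x}^{\mathcal{I}_i}$ when $\mathcal{I}_i$ has repeated entries: one must verify that the multinomial coefficient $(p-1)!/\prod_{\bm{r}} m_{\mathcal{I}_i,(i,\bm{r})}!$ is cancelled exactly by the factorials produced by the iterated derivative, so that the clean Wilson sign $(-1)^m$ emerges independently of the multiplicity structure of $\mathcal{I}$. After that, everything reduces to the routine bookkeeping of Laurent exponents in the $\phi$, $(\cdot)^{1/p}$ formalism.
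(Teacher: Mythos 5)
Your proposal is correct and follows essentially the same route as the paper: apply Theorem~\ref{thm:Parseval} with $w=\bm{x}^{\bm{s}}$, note that the $p$-th powers $(\mathrm{vol}(\pi(u)))^p$ are killed by the derivations so that $\partial^{\mathcal{I}}$ acts only on $g_1^{p-1}\cdots g_m^{p-1}$, and use the multinomial expansion together with Wilson's theorem to reduce the contraction to a Kronecker delta matching $\bm{x}^{(p-1)\mathbf{1}}u^p=\bm{x}^{\mathcal{I}}\bm{x}^{\bm{s}}$. The only cosmetic difference is that you differentiate each $g_i^{p-1}$ factorwise while the paper first expands the whole product and then pairs $\partial^{\mathcal{I}}$ against the $a$-monomials via $\partial^{\mathcal I}(\prod_{(i,\bm{r})\in\mathcal{I}'}a_{i,\bm{r}})=(\mathcal{I}\,!)\,\delta_{\mathcal{I},\mathcal{I}'}$; the factorial cancellation you flag as the main obstacle is exactly the one the paper handles, and it goes through.
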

\begin{proof}
    By the Parseval-Rayleigh identities of   Theorem~\ref{thm:Parseval} and the identity $(fg^p)' = f'g^p$, we get
    \begin{align} \label{eq:diff sum}
        \partial^{{\mathcal I}} ((\mathrm{vol} \circ \pi) (\bm{x}^{\bm{s}})) &= 
           \sum_{u \in {\mathcal M}_s} 
    \partial^{{\mathcal I}} \left( (\bm{x}^{(p-1)\bm{1}}u^p)\circ(g_1^{p-1}\cdots g_m^{p-1} \bm{x}^{\bm{s}}) \right) 
             (\mathrm{vol} ( \pi (u)))^p.
    \end{align}
    Combining Wilson's theorem with the multinomial theorem we get 
    \begin{equation}     \label{eqn!multinomialthm}
        g_1^{p-1}\cdots g_m^{p-1} = (-1)^m    \sum_{\mathcal I}   \left(  \frac{1}  {  {\mathcal I} \, ! }  \,
                           x^{\mathcal{I}} \,  \prod_{(i,\bm{r}) \in \mathcal{I}} a_{i,\bm{r} }  \, \right),
    \end{equation}
    where the sum is taken over all multisubsets ${\mathcal I}$ of ${\mathcal I}_0$ with the Property~($*$). 
    For multisubsets ${\mathcal I}, {\mathcal I}'$ of ${\mathcal I}_0$ of the same length,  it is easy to see that 
    \begin{equation*}
        \partial^{\mathcal I} \left( \prod_{(i,\bm{r}) \in \mathcal{I}'} a_{i,\bm{r}} \right) =
                 (\mathcal{I} \, ! )  \;  \delta_{{\mathcal I},{\mathcal I'}}
    \end{equation*}
    where $\delta$ denotes the Kronecker delta. Thus, for a monomial $u \in {\mathcal M}_s$ we have
    \begin{align} \label{eq:termwise deriv}
        \partial^{\mathcal I} \left((x^{(p-1)\mathbf{1}} u^p) \circ (g_1^{p-1}\cdots g_m^{p-1} x^{\bm{s}})\right)=
            \begin{cases}
                (-1)^m  &  \text{ if } \bm{x}^{(p-1)\mathbf{1}} u^p=\bm{x}^{\mathcal I}\bm{x}^{\bm{s}}, \\
                0  &  \text{ otherwise}.
            \end{cases}
    \end{align}  
    Equations~(\ref{eq:diff sum}) and (\ref{eq:termwise deriv}) imply the desired 
    Equation~(\ref{eq:diff}).
\end{proof}  

\subsection{$p$-anisotropy, Lefschetz property}
Suppose that we are in the same setting as in Section~\ref{sec:diff}.
Let $\ell=x_1+\cdots+x_m \in {\mathbbm{K}}[x_1,\ldots,x_m]$.
\begin{prop} \label{prop:anisSLP}
    For $\alpha \in R_i$ and $k \in \mathbb{N}$ with $pi+k \leq s$, we have that
    if $\pi(\alpha) \neq 0$  then $\pi(\alpha^p\ell^k) \neq 0$.
\end{prop}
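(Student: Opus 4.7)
By the Gorenstein property of $\mathcal A$, $\pi(\alpha^p\ell^k) \neq 0$ in $\mathcal A_{pi+k}$ if and only if some $\beta \in R_{s-pi-k}$ (with $R = {\mathbbm K}[x_1,\ldots,x_m]$) satisfies $\mathrm{vol}(\pi(\alpha^p\ell^k\beta)) \neq 0$ in ${\mathbbm K}$; the plan is to exhibit such a $\beta$. Since ${\mathbbm K} = {\mathbbm k}(a_{i,\bm r})$ is a function field in the auxiliary indeterminates, it is enough to find $\beta$ together with a multisubset $\mathcal I \subseteq \mathcal I_0$ having Property~($*$) such that $\partial^{\mathcal I}(\mathrm{vol}(\pi(\alpha^p\ell^k\beta)))$ is a nonzero element of ${\mathbbm K}$.

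I would restrict $\beta$ to have $p$-th-power coefficients: $\beta = \sum_{\bm c} e_{\bm c}^p \bm x^{\bm c}$ for free parameters $e_{\bm c} \in {\mathbbm K}$. Writing $\alpha = \sum_{\bm a} c_{\bm a}\bm x^{\bm a}$ and expanding $\alpha^p\ell^k\beta$ monomially, I apply Theorem~\ref{thm:diff} to each term. Combining (a) $\alpha^p = \sum c_{\bm a}^p \bm x^{p\bm a}$ in characteristic $p$, (b) $\binom{k}{\bm b} \in \mathbb{F}_p$, hence $\binom{k}{\bm b}^p = \binom{k}{\bm b}$, and (c) the identity $\sum_i a_i^p = (\sum_i a_i)^p$, the resulting sum collapses to a single $p$-th power:
\[
\partial^{\mathcal I}\bigl(\mathrm{vol}(\pi(\alpha^p\ell^k\beta))\bigr) = (-1)^m \bigl(\mathrm{vol}(\pi(\alpha\, Q))\bigr)^p,
\]
where $Q \in R_{s-i}$ is the ``Cartier-type'' polynomial $Q = \phi\bigl((\ell^k \bm x^{\bm g} E)^{1/p}\bigr)$ with $E := \sum_{\bm c} e_{\bm c}\bm x^{\bm c}$, $\bm g := \bm r(\mathcal I) - (p-1)\mathbf{1}$ (freely chosen in $\mathbb{N}^m$ with $|\bm g|=(p-1)s$), and $(\cdot)^{1/p}$ is the termwise $p$-th root on $p$-divisible monomial exponents.

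Because $\pi(\alpha) \neq 0$, Gorenstein duality supplies a monomial $\bm x^{\bm u_0}$ of degree $s-i$ with $\mathrm{vol}(\pi(\alpha\bm x^{\bm u_0})) \neq 0$; it remains to choose $\bm g$ and the $e_{\bm c}$'s so that $\mathrm{vol}(\pi(\alpha Q)) \neq 0$. I would target $Q$ proportional to $\bm x^{\bm u_0}$: taking $E := \bm x^{\bm c}$ for $\bm c \in \mathbb{N}^m$ with $\bm g + \bm c \leq p\bm u_0$ componentwise and $|\bm c| = s-pi-k$, the coefficient of $\bm x^{\bm u_0}$ in $Q$ is the multinomial $\binom{k}{\bm b_0}$ with $\bm b_0 := p\bm u_0 - \bm g - \bm c \geq 0$; concentrating $\bm b_0$ on a single coordinate forces $\binom{k}{\bm b_0} = 1$. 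The main obstacle is controlling the remaining support of $Q$ so as not to cancel the $\bm x^{\bm u_0}$ contribution under pairing with $\pi(\alpha)$: when $s - pi \leq (p-1)m$ one can tighten $\bm g$ to force $Q$'s support to be exactly $\{\bm u_0\}$, and in the remaining range one combines Lucas' theorem (controlling which $\binom{k}{\bm b}$ are nonzero mod $p$) with the freedom in $\bm g$ and in the $e_{\bm c}$'s to rule out cancellation.
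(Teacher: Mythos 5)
Your setup is sound and, up to the collapse
\[
\partial^{\mathcal I}\bigl(\mathrm{vol}(\pi(\alpha^p\ell^k\beta))\bigr)
= (-1)^m\bigl(\mathrm{vol}(\pi(\alpha\,Q))\bigr)^p ,
\qquad Q=\phi\bigl((\ell^k\bm{x}^{\bm g}E)^{1/p}\bigr),
\]
this is exactly the paper's mechanism (Theorem~\ref{thm:diff} applied termwise, $\binom{k}{\bm b}^p=\binom{k}{\bm b}$, Frobenius additivity, then Poincar\'e duality to produce $\bm{x}^{\bm u_0}$). The genuine gap is the final non-cancellation step, and it cannot be closed the way you suggest when $k\geq p$. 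If $E=\bm{x}^{\bm c}$, then $Q=\sum_{\bm b}\binom{k}{\bm b}\bm{x}^{(\bm b+\bm c+\bm g)/p}$, summed over $\bm b\in\mathbb{N}^m_k$ in the residue class $-(\bm c+\bm g)\bmod p$. Forcing a unique admissible $\bm b$ requires that class to be represented by some $\bm b_0\le (p-1)\mathbf{1}$ with $|\bm b_0|=k$; but by Kummer/Lucas, adding $m$ single base-$p$ digits to reach $k\ge p$ necessarily carries, so $\binom{k}{\bm b_0}\equiv 0$ and that term dies --- every $\bm b$ with $\binom{k}{\bm b}\not\equiv 0$ has an entry $\ge p$, so $Q$ genuinely has several terms (e.g.\ for $k=p$ one gets $Q=\ell\cdot\bm{x}^{(\bm c+\bm g)/p}$). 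Worse, writing $k=pq+r$, the identity $\phi((f^pg)^{1/p})=f^{[p]}\phi(g^{1/p})$ shows that \emph{every} achievable $Q$ lies in $\ell^{q}\cdot R_{s-i-q}$; hence proving $\mathrm{vol}(\pi(\alpha Q))\neq 0$ for some choice of $\bm g, E$ is \emph{equivalent} to $\pi(\alpha\ell^{q})\neq 0$, which is (an instance of) the statement you are trying to prove for a smaller exponent. So ``freedom in $\bm g$ and the $e_{\bm c}$'s'' cannot rule out cancellation on its own; some induction on $k$ is unavoidable, and your dichotomy in terms of $s-pi$ versus $(p-1)m$ is not the relevant one.

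The paper resolves this in two steps. First it proves the case $k\le p-1$ (Lemma~\ref{lem:anisWLP}): there the multinomial expansion of $\ell^k$ contributes at most one summand with exponent vector in $(p\mathbb{Z})^m$ --- two distinct $\bm r,\bm r'\in\mathbb{N}^m_k$ differ in some coordinate by at most $k<p$ --- and its coefficient $\mu_{\bm r_0}=k!/\prod r_t!$ is a unit since $k<p$; one then chooses $\bm r_0,\bm s_0,\mathcal I$ with $\bm{x}^{p\bm u}=\bm{x}^{\bm r_0+\bm s_0}\bm{x}^{\mathcal I}\bm{x}^{(1-p)\mathbf 1}$. Then the general case follows by Euclidean division $k_0=jp+r$: minimality gives $\pi(\alpha^p\ell^{j})\neq 0$, hence $\pi(\alpha\ell^{j})\neq 0$, and applying the lemma to $\alpha\ell^{j}$ with exponent $r$ yields $\pi(\alpha^p\ell^{k_0})\neq 0$. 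Your argument becomes correct if you restrict it to $k\le p-1$ and then add this descent.
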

We first prove the following special case of Proposition~\ref{prop:anisSLP}.
\begin{lemma} \label{lem:anisWLP}
     For $\alpha \in R_i$ and $k \in \mathbb{N}$ with $k \leq p-1$ and $pi+k \leq s$, we have 
  that if $\pi(\alpha) \neq 0$  then $\pi(\alpha^p\ell^k) \neq 0$.
\end{lemma}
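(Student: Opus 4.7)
The plan is to produce a nonzero partial derivative with respect to the auxiliary indeterminates $a_{i,\bm{r}}$ of the rational function $\mathrm{vol}(\pi(\alpha^p\ell^k\eta))\in\mathbb{K}$, for a well-chosen monomial $\eta\in R_{s-pi-k}$. Since $\mathcal{A}$ is Gorenstein with socle in degree $s$, the conclusion $\pi(\alpha^p\ell^k)\neq 0$ is equivalent to the existence of such $\eta$ with $\mathrm{vol}(\pi(\alpha^p\ell^k\eta))\neq 0$, while the hypothesis $\pi(\alpha)\neq 0$ supplies a monic monomial $u'_0=\bm{x}^{\bm{r}'_0}\in\mathcal{M}_{s-i}$ with $V_{u'_0}:=\mathrm{vol}(\pi(\alpha u'_0))\neq 0$.

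First I would apply $\partial^{\mathcal{I}}$, for some $\mathcal{I}$ with Property~$(*)$, to the Parseval-Rayleigh identity of Theorem~\ref{thm:Parseval} written for $w=\alpha^p\ell^k\eta$. In characteristic $p$ every $\partial^{\mathcal{I}}(\mathrm{vol}(\pi(u))^p)$ vanishes, and the Wilson-theorem computation already used in the proof of Theorem~\ref{thm:diff} gives $\partial^{\mathcal{I}}(g_1^{p-1}\cdots g_m^{p-1})=(-1)^m\bm{x}^{\mathcal{I}}$. Combining this with the characteristic-$p$ identity $\alpha^p=\sum_{\bm{s}}\alpha_{\bm{s}}^p\bm{x}^{p\bm{s}}$ and the elementary contraction rule $(Xu^p)\circ(\bm{x}^{p\bm{s}}Y)=(X(u/\bm{x}^{\bm{s}})^p)\circ Y$ (valid whenever $\bm{x}^{\bm{s}}$ divides $u$), where $X=x_1^{p-1}\cdots x_m^{p-1}$, and reindexing the sum by $u'=u/\bm{x}^{\bm{s}}$, the Frobenius sum collapses to the ``$\alpha$-twisted'' derivative identity
\[
\partial^{\mathcal{I}}\mathrm{vol}(\pi(\alpha^p\ell^k\eta))=(-1)^m\sum_{u'\in\mathcal{M}_{s-i}}\bigl((X(u')^p)\circ(\bm{x}^{\mathcal{I}}\ell^k\eta)\bigr)\,V_{u'}^p.
\]

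Next I would set $\eta=\bm{x}^{\bm{e}}$ a monomial and observe that for $u'=\bm{x}^{\bm{r}'}$ the contraction above evaluates to the multinomial coefficient $\binom{k}{(p-1)\bm{1}+p\bm{r}'-\bm{t}_{\mathcal{I}}-\bm{e}}$, which is nonzero precisely when the vector in the lower argument lies in $\mathbb{N}^m_k$. Now fix any $\bm{t}^*_0\in\mathbb{N}^m_k$ and split the (entrywise non-negative) vector $(p-1)\bm{1}+p\bm{r}'_0-\bm{t}^*_0$ as $\bm{t}_{\mathcal{I}}+\bm{e}$; the existence of such a split and of an $\mathcal{I}$ with Property~$(*)$ realising the chosen $\bm{t}_{\mathcal{I}}$ reduces by a simple weight-balancing argument to the elementary fact that any vector in $\mathbb{N}^m_{(p-1)d_i}$ can be written as a sum of $p-1$ vectors from $\mathbb{N}^m_{d_i}$. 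For this choice, for any $\bm{r}'\neq\bm{r}'_0$ the multinomial input becomes $\bm{t}^*_0+p(\bm{r}'-\bm{r}'_0)$, and at any coordinate $j$ with $r'_j<r'_{0,j}$ this entry is at most $t^*_{0,j}-p\leq k-p<0$; hence only the $u'_0$-term survives, and it equals $(-1)^m\binom{k}{\bm{t}^*_0}V_{u'_0}^p\neq 0$, since $\binom{k}{\bm{t}^*_0}\not\equiv 0\pmod{p}$ as $k\leq p-1$.

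The main obstacle is recognising that both appearances of the hypothesis $k\leq p-1$---the non-vanishing of the multinomial modulo $p$, and the ``no base-$p$ carry'' argument forcing $\bm{r}'=\bm{r}'_0$---are exactly the combinatorial rigidity that isolates the $V_{u'_0}^p$ term; both ingredients fail for $k\geq p$, which is why the stronger Proposition~\ref{prop:anisSLP} requires a separate argument beyond this lemma.
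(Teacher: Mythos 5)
Your argument is correct and follows essentially the same route as the paper's: you differentiate the Parseval--Rayleigh identity by $\partial^{\mathcal I}$, use the characteristic-$p$ vanishing of derivatives of $p$-th powers together with the Wilson/multinomial computation, reduce to a single multinomial coefficient $k!/\prod_t r_t!$ that is nonzero because $k\leq p-1$, and invoke Poincar\'e duality to produce the witness monomial. The only cosmetic difference is that you inline the derivation of the paper's Theorem~\ref{thm:diff} in an ``$\alpha$-twisted'' contraction form rather than citing it, whereas the paper applies Theorem~\ref{thm:diff} monomial-by-monomial to $\alpha^p$ and recombines via Frobenius linearity; the uniqueness-of-the-surviving-term argument (no base-$p$ carry since $|r_j-r'_j|\leq k<p$) is identical in both.
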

\begin{proof}
    Write $\alpha$ as $\alpha=\sum_{\bm{t} \in \mathbb{N}^m_i} \lambda_{\bm{t}} x^{\bm{t}}$ 
         for some $\lambda_{\bm{t}} \in {\mathbbm{K}}$.
    For any monomial $\bm{x}^{\bm{s}}$ with degree $|\bm{s}|=s-pi$ and any
   multisubset ${\mathcal I}$ of ${\mathcal I}_0$ with the Property~($*$)     we have
    \begin{align}
         \partial^{{\mathcal I}} ( (\mathrm{vol} \circ \pi)(\alpha^p \bm{x}^{\bm{s}}))
          = &\sum_{\bm{t} \in \mathbb{N}^m_i} \partial^{{\mathcal I}} 
               ( (\mathrm{vol}\circ \pi)(\lambda_{\bm{t}}^p \bm{x}^{p\bm{t}+\bm{s}})) \notag\\
        =&\sum_{\bm{t} \in \mathbb{N}^m_i} \lambda_{\bm{t}}^p  \partial^{{\mathcal I}}
                   ( (\mathrm{vol} \circ \pi)( \bm{x}^{p\bm{t}+\bm{s}})) & \notag\\
        =& (-1)^m \sum_{\bm{t} \in \mathbb{N}^m_i} \lambda_{\bm{t}}^p 
              \left( (\mathrm{vol} \circ \pi \circ \phi)\left( \bm{x}^{p\bm{t}}\left(\bm{x}^{\bm{s}}
                    \bm{x}^{\mathcal I}\bm{x}^{(1-p)\bm{1}}\right) \right)^{1/p} \right)^p 
                   & (\text{by Theorem~\ref{thm:diff}}) \notag\\
        =& (-1)^m \sum_{\bm{t} \in \mathbb{N}^m_i} \lambda_{\bm{t}}^p
            \left( (\mathrm{vol} \circ \pi \circ \phi )\left(  
                  \bm{x}^{\bm{t}}\left(\bm{x}^{\bm{s}} 
                    \bm{x}^{\mathcal I}\bm{x}^{(1-p)\bm{1}}\right)^{1/p}  \right)\right)^p \notag\\
        =& (-1)^m \left( (\mathrm{vol} \circ \pi \circ \phi)\left( \sum_{\bm{t} \in \mathbb{N}^m_i} 
                 \lambda_{\bm{t}} \bm{x}^{\bm{t}}\left(\bm{x}^{\bm{s}} 
                    \bm{x}^{\mathcal I}\bm{x}^{(1-p)\bm{1}}\right)^{1/p}  \right)\right)^p \notag\\
        =&(-1)^m    \left( (\mathrm{vol}\circ \pi \circ \phi) \left(
                   \alpha\left(\bm{x}^{\bm{s}}\bm{x}^{{\mathcal I}}
               \bm{x}^{(1-p)\mathbf{1}}\right)^{1/p} \right) \right)^p.  \label{eq:diff with pth power}
      \end{align}
   By the multinomial theorem, we have  
   \[
         \ell^k =  \sum_{\bm{r} \in \mathbb{N}^m_k} \mu_{\bm{r}}  \bm{x}^{\bm{r}},   \quad  \quad
            \text{where}  \quad \quad      \mu_{\bm{r}} =  \frac{k!}{\prod_{t=1}^m  r_t ! }
   \]
for all $\bm{r} = (r_1, \dots ,r_m)  \in \mathbb{N}^m_k$.
Using  Equation~(\ref{eq:diff with pth power}),  we get that 
    for any monomial $\bm{x}^{\bm{s}}$ with degree $|\bm{s}|=s-pi-k$ 
   and any
   multisubset ${\mathcal I}$ of ${\mathcal I}_0$ with the Property~($*$)     we have
    \begin{align} \label{eq:diff with ell}
        \partial^{\mathcal I} ( (\mathrm{vol}\circ \pi)(\alpha^p\ell^k\bm{x}^{\bm{s}}) )
               &=  (-1)^m   \sum_{\bm{r} \in \mathbb{N}^m_k} \mu_{\bm{r}}^{p} \left( (\mathrm{vol}\circ \pi \circ \phi) 
                       \left( \alpha\left(\bm{x}^{\bm{r}+\bm{s}}\bm{x}^{{\mathcal I}}
                   \bm{x}^{(1-p)\mathbf{1}}\right)^{1/p}\right) \right)^p.
    \end{align}
    Note that the summands of the right-hand side of Equation~(\ref{eq:diff with ell}) are zero except
    possibly  for one summand.
    To see this, observe that if the summand corresponding to $\bm{r}$ is nonzero, then the exponent vector of 
             $\bm{x}^{\bm{r}+\bm{s}}\bm{x}^{{\mathcal I}}\bm{x}^{(1-p)\mathbf{1}}$ must be in $(p\mathbb{Z})^m$. 
    But then for any other $\bm{r}' \in \mathbb{N}^{m}_{k}$ there exists a coordinate $i$ 
           with $r_i \neq r_i'$. Since $1 \leq |r_i - r_i'| \leq k <p$, it follows that
         the exponent vector of $\bm{x}^{\bm{r'}+\bm{s}}\bm{x}^{{\mathcal I}}\bm{x}^{(1-p)\mathbf{1}}$ 
               is not in $(p\mathbb{Z})^m$.  

   Since $\mathcal{A}$ is Artinian Gorenstein it satisfies Poincar\'e Duality.
    Therefore,   $\pi(\alpha) \neq 0$ implies that there exists  a monomial $\bm{x}^{\bm{u}} \in R$ 
    of degree $|\bm{u}|=s-i$ 
     such that $\pi(\alpha\bm{x}^{\bm{u}}) \neq 0$.
    We can pick $\bm{r_0} \in \mathbb{N}^m_k$, $\bm{s_0} \in \mathbb{N}^m_{s-pi-k}$ and a
   multisubset ${\mathcal I}$ of ${\mathcal I}_0$ with the Property~($*$)   
   so that
    \[
        \bm{x}^{p\bm{u}}=\bm{x}^{\bm{r_0}+\bm{s_0}}\bm{x}^{{\mathcal I}}\bm{x}^{(1-p)\mathbf{1}}
    \]
    holds. By the uniqueness of the nonzero term on 
   the right-hand side of Equation~(\ref{eq:diff with ell}), we get 
    \[
    \partial^{\mathcal I} ((\mathrm{vol}\circ \pi) (\alpha^p\ell^k\bm{x}^{\bm{s}_0}) ) = 
         (-1)^m \mu_{\bm{r_0}}^p   ((\mathrm{vol}\circ \pi ) (\alpha \bm{x}^{\bm{u}}))^p \neq 0,
    \]
    due to the fact that since  $k < p$ the element  $\mu_{\bm{r_0}}^p$ of $ {\mathbbm{K}}$ is nonzero.
    This implies that $\alpha^p\ell^k\neq 0$.
\end{proof}

\begin{proof}[Proof of Proposition~\ref{prop:anisSLP}]
   We assume that Proposition~\ref{prop:anisSLP} is  not true. We denote by $k_0$ the least value of 
    $k \in \mathbb{N}  $ such that the proposition is not true. We 
   perform   Euclidean division  $k_0 =  j p +r$, with $j,r \in \mathbb{Z}$ and  $0 \leq r \leq p-1$.  
   We have $k_0 > j$.  So,  $\pi(\alpha^p\ell^j) \neq 0$.  This, implies that  $\pi(\alpha \ell^j) \neq 0$. Applying Lemma~\ref{lem:anisWLP}  for  $\alpha = \alpha \ell^{j} $ and $k=r$ we get
    $\pi(\alpha^p\ell^{k_0}  ) \neq 0$, which contradicts the definition of $k_0$.
\end{proof}
 
\begin{remark}  
    In particular, Proposition~\ref{prop:anisSLP} implies that ${\mathcal A}$ satisfies the $p$-anisotropy:
     If  $0 \leq i \leq \frac{s}{p}$ and 
     $\alpha \in {\mathcal A}_i$  is nonzero, then $\alpha^p$ is nonzero.
\end{remark}
 
Another corollary is the following result regarding the Lefschetz property.
\begin{corollary} \label{cor:Lefschetz}
    Assume $0 \leq i \leq \frac{s}{p}$. Then the multiplication map 
    $\times\pi(\ell)^{s-pi}:{\mathcal A}_i \rightarrow {\mathcal A}_{i+s-pi}$ is injective.
    In particular, if $p=2$ then ${\mathcal A}$ has the Strong Lefschetz Property.
\end{corollary}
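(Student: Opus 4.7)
The plan is to deduce Corollary~\ref{cor:Lefschetz} directly from Proposition~\ref{prop:anisSLP} by combining it with the Poincar\'e duality enjoyed by the Artinian Gorenstein ring $\mathcal{A}$. Fix $i$ with $0 \leq i \leq s/p$, set $k = s - pi$, and let $\alpha \in R_i$ represent a nonzero class in $\mathcal{A}_i$. My goal is to show $\pi(\alpha\,\ell^{k}) \neq 0$ in $\mathcal{A}_{i+k}$.

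First I would invoke Poincar\'e duality. Since $\mathcal{A}$ is Artinian Gorenstein with socle concentrated in degree $s$, the multiplication pairing $\mathcal{A}_{i+k} \times \mathcal{A}_{s-i-k} \to \mathcal{A}_s$ is non-degenerate. Because $s - i - k = (p-1)i$, it therefore suffices to produce some $\gamma \in R_{(p-1)i}$ with $\pi(\alpha\,\ell^{k}\,\gamma) \neq 0$ in the one-dimensional socle $\mathcal{A}_s$. The natural choice is $\gamma = \alpha^{p-1}$, which yields the class $\pi(\alpha^p \ell^{s-pi})$. Then I would apply Proposition~\ref{prop:anisSLP} to this $\alpha$ with $k = s - pi$ (for which the hypothesis $pi + k = s \leq s$ is satisfied) to conclude that $\pi(\alpha^p \ell^{s-pi}) \neq 0$, establishing the desired injectivity.

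For the strong Lefschetz statement when $p = 2$, the hypothesis $i \leq s/p$ becomes $i \leq s/2$, and the first part then furnishes an injection $\times\pi(\ell)^{s-2i}: \mathcal{A}_i \to \mathcal{A}_{s-i}$ for every such $i$. The Gorenstein property of $\mathcal{A}$ forces $\dim_{\mathbbm{K}} \mathcal{A}_i = \dim_{\mathbbm{K}} \mathcal{A}_{s-i}$, so these injective maps between spaces of equal dimension are automatically isomorphisms, which is precisely the Strong Lefschetz Property. There is essentially no substantive obstacle here: once Proposition~\ref{prop:anisSLP} is in hand, the argument reduces to degree bookkeeping and a single invocation of Poincar\'e duality, with the multiplier $\alpha^{p-1}$ being the whole trick that converts the injectivity question in degree $i+k$ into an anisotropy question in the socle.
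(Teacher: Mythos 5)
Your proof is correct and follows essentially the same route as the paper: apply Proposition~\ref{prop:anisSLP} with $k=s-pi$ to get $\pi(\alpha^p\ell^{s-pi})\neq 0$ and conclude $\pi(\alpha\ell^{s-pi})\neq 0$, then use the symmetry of the Hilbert function to upgrade injectivity to the Strong Lefschetz Property when $p=2$. The only cosmetic difference is that your appeal to Poincar\'e duality is not actually needed for the first step --- you only use the trivial direction (a nonzero product $\alpha^{p-1}\cdot(\alpha\ell^{s-pi})$ forces the factor $\alpha\ell^{s-pi}$ to be nonzero), which is exactly how the paper phrases it.
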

\begin{proof}
    If $\alpha \in {\mathcal A}_i$ is nonzero  then,  by 
     Proposition~\ref{prop:anisSLP},  $\alpha^p\pi(\ell)^{s-pi}$ is nonzero. As a consequence, 
       $\alpha \pi(\ell)^{s-pi}$ is nonzero.
\end{proof}
 \bibliographystyle{plain}
\bibliography{myreference}

\end{document}